\documentclass[a4paper,11pt,reqno]{article}

\usepackage{amsmath,amssymb,eucal,amsthm}
\setlength{\textwidth}{15cm}
\setlength{\textheight}{23cm}
\setlength{\oddsidemargin}{0.5cm}
\setlength{\evensidemargin}{0cm}
\setlength{\topmargin}{-0.5in}

\DeclareMathOperator{\Li}{\mathrm{Li}}
\DeclareMathOperator{\wt}{\mathrm{w}}
\DeclareMathOperator{\dep}{\mathrm{d}}
\DeclareMathOperator{\h}{\mathrm{h}}
\def\aa{\mathbf{a}}
\def\bb{\mathbf{b}}
\def\kk{\mathbf{k}}
\def\={\,=\,}

\def\R{\mathbb{R}}

\font\fivecy=wncyr5  \def\sa{{\hbox{\fivecy X}}}

\newtheorem{theorem}{Theorem}[section]

\theoremstyle{definition}

\begin{document}

\title{On multiple zeta values of extremal height}

\author{Masanobu Kaneko and Mika Sakata}

\date{\today}

\maketitle

\begin{abstract}
We give three identities involving multiple zeta values of height one and of maximal height; an explicit 
formula for the height-one multiple zeta values, a regularized sum formula, and a sum formula for the multiple zeta values of maximal height.
\end{abstract}

\section{Main results} \label{sec-1}

The multiple zeta value (MZV) is a real number given by the nested series
\[ \zeta(k_1,\ldots,k_r)\=\sum_{0<m_1<\cdots<m_r}\frac1{m_1^{k_1}\cdots m_r^{k_r}} \]
for each index set $\kk=(k_1,\ldots,k_r)$ of positive integers $k_i$, with the last entry $k_r>1$ 
for convergence. The quantities $\wt(\kk):=k_1+\cdots+k_r$, $\dep(\kk):=r$,
and $\h(\kk):=\#\{i\,|\,k_i>1, 1\le i\le r\}$ are called respectively the weight, the depth, and 
the height of the index set $\kk$ (or of the multiple zeta value $\zeta(\kk)=\zeta(k_1,\ldots,k_r)$).

In this paper, we present the following three identities which involve multiple zeta values of extremal height,
that is,  the MZVs of height one or of maximal height (all components of the index are greater than one). 

\begin{theorem}[Explicit formula for the height-one MZV]\label{thm1}
For any integers $r,k\geq 1$, we have 
\begin{equation}\label{symmzv}
\zeta(\underbrace{1, \ldots , 1}_{r-1}, k+1)
\=\sum^{\min(r,k)}_{j=1}(-1)^{j-1}\sum_{\substack{\wt(\aa)=k,\,\wt(\bb)=r\\\dep(\aa)=\dep(\bb)=j}}
\zeta(\aa+\bb),
\end{equation}
where,  for two indices $\aa=(a_1,\ldots,a_j)$ and $\bb=(b_1,\ldots,b_j)$ of the same depth,
$\zeta(\aa+\bb)$ denotes $\zeta(a_1+b_1,\ldots,a_j+b_j)$. 
\end{theorem}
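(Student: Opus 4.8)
The plan is to establish \eqref{symmzv} by comparing bivariate generating functions. For $X,Y$ near the origin set $Z(X,Y):=\sum_{k,r\ge1}\zeta(\{1\}^{r-1},k+1)X^kY^r$, where $\{1\}^{r-1}$ stands for $r-1$ ones, and let $W(X,Y)$ be the generating function whose $X^kY^r$-coefficient is the right-hand side of \eqref{symmzv}. On the height-one side I would first recall the classical closed form
\[
Z(X,Y)\=1-\frac{\Gamma(1-X)\Gamma(1-Y)}{\Gamma(1-X-Y)},
\]
which one gets from the beta-integral representation $\zeta(\{1\}^{r-1},k+1)=\frac{1}{(k-1)!\,r!}\int_0^1(-\log t)^{k-1}(-\log(1-t))^r\frac{dt}{t}$: multiplying by $X^kY^r$ and summing turns the two log-powers into $Xt^{-X}$ and $(1-t)^{-Y}-1$, leaving the regularized beta integral $X\int_0^1 t^{-X-1}\big((1-t)^{-Y}-1\big)\,dt$, which equals the displayed quantity. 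Thus it remains to compute $W$ and to check $W=Z$.

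For $W$, fix the common depth $j$ and write each summand as a nested series, $\zeta(\aa+\bb)=\sum_{0<m_1<\cdots<m_j}\prod_{i=1}^j m_i^{-(a_i+b_i)}$. Interchanging the (absolutely convergent, for small $X,Y$) summations and applying the elementary identity $\sum_{a_1+\cdots+a_j=k,\ a_i\ge1}\prod_i m_i^{-a_i}=[X^k]\prod_{i=1}^j\frac{X}{m_i-X}$, and its analogue for $\bb$ and $Y$, the sums over $\aa,\bb$ and then over $k,r$ collapse to
\[
W(X,Y)\=\sum_{j\ge1}(-1)^{j-1}\sum_{0<m_1<\cdots<m_j}\ \prod_{i=1}^j\frac{XY}{(m_i-X)(m_i-Y)}.
\]
The inner sum is precisely the $j$-th elementary symmetric function of the numbers $f_m:=\frac{XY}{(m-X)(m-Y)}$, $m\ge1$; since $\sum_m|f_m|<\infty$ near the origin, setting $t=-1$ in $\prod_m(1+tf_m)=\sum_j t^je_j$ gives $W=1-\prod_{m\ge1}(1-f_m)$. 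Now $1-f_m=\frac{m(m-X-Y)}{(m-X)(m-Y)}$, and the partial product rewrites via Pochhammer symbols as $\prod_{m=1}^N\frac{m(m-X-Y)}{(m-X)(m-Y)}=\frac{N!\,\Gamma(N+1-X-Y)}{\Gamma(N+1-X)\,\Gamma(N+1-Y)}\cdot\frac{\Gamma(1-X)\Gamma(1-Y)}{\Gamma(1-X-Y)}$, whose first factor tends to $1$ as $N\to\infty$ by $\Gamma(N+a)/\Gamma(N+b)\sim N^{a-b}$. Hence $W=1-\frac{\Gamma(1-X)\Gamma(1-Y)}{\Gamma(1-X-Y)}=Z$, and comparing the $X^kY^r$-coefficients yields \eqref{symmzv}; the restriction $j\le\min(r,k)$ comes out automatically, since $\prod_{i=1}^j\frac{X}{m_i-X}$ vanishes to order $j$ at $X=0$ and likewise in $Y$, so larger $j$ contribute nothing to the coefficient of $X^kY^r$.

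The genuinely routine ingredients are the beta-integral evaluation of $Z$, the Pochhammer/$\Gamma$ computation of the product, and the symmetric-function bookkeeping. The point that needs care is justifying the various interchanges of summation — over the nested index $(m_i)$, over $\aa$ and $\bb$, over the depth $j$, and over $(k,r)$ — together with the passage to the infinite product; all of this is legitimate on a fixed small polydisc because $\sum_{m\ge1}\sup|f_m|<\infty$ there, but the proof should be organised so that absolute convergence is visible at each stage. Once the displayed formula for $W$ is in hand, the rest is pure calculation.
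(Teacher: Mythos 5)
Your argument is correct, and it takes a genuinely different route from the paper's. The paper proves \eqref{symmzv} by induction on $r$: it reduces to $k\ge r$ via the duality $\zeta(\underbrace{1,\ldots,1}_{r-1},k+1)=\zeta(\underbrace{1,\ldots,1}_{k-1},r+1)$, extracts the constant term at $s=0$ of the Arakawa--Kaneko identity \eqref{keyid}, and then untangles the resulting stuffle products of harmonically regularized values through a binomial cancellation. You instead show that both sides of \eqref{symmzv} have the same bivariate generating function $1-\Gamma(1-X)\Gamma(1-Y)/\Gamma(1-X-Y)$: the left side via the beta integral (equivalently, the Aomoto--Drinfel'd formula that the paper itself displays right after the theorem), and the right side by resumming each $\zeta(\aa+\bb)$ over compositions into geometric series, recognizing the depth-$j$ contribution as the $j$-th elementary symmetric function of $f_m=XY/((m-X)(m-Y))$, and collapsing the alternating sum to $1-\prod_{m\ge1}(1-f_m)$, which evaluates by the standard $\Gamma(N+a)/\Gamma(N+b)\sim N^{a-b}$ asymptotics. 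I checked the individual steps (the integral representation of $\zeta(\underbrace{1,\ldots,1}_{r-1},k+1)$, the identity $X\Gamma(-X)=-\Gamma(1-X)$ hidden in the regularized beta integral, the factorization $1-f_m=m(m-X-Y)/((m-X)(m-Y))$, and the automatic truncation at $j\le\min(r,k)$) and they all hold; the convergence caveats you flag are real but routine on a small polydisc. Your route is more elementary and self-contained --- no $\xi$-function, no regularization, no induction, and no separate appeal to duality, since the $X\leftrightarrow Y$ symmetry is manifest throughout (though duality for height-one MZVs is of course already encoded in the Aomoto--Drinfel'd formula you start from, so this is not an independent proof of duality either). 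What the paper's heavier machinery buys is its by-products: the same Laurent-expansion analysis of \eqref{keyid}, with $\Gamma(s+1)$ inserted, immediately yields the shuffle-regularized sum formula of Theorem~\ref{thm2}, which is invisible from your computation.
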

Note that the right-hand side of this formula is symmetric in $r$ and $k$, and thus the formula 
makes the duality $\zeta(\underbrace{1, \ldots , 1}_{r-1}, k+1)=\zeta(\underbrace{1, \ldots , 1}_{k-1}, r+1)$
visible.  (N.B. We use the duality in our proof, so that we are not giving an alternative proof
of the duality.)  To our knowledge, no such symmetric explicit formula for the height-one MZV 
has been known, except for the well-known symmetric generating function \cite{Ao, Dr}:
\[ 1-\sum_{r,k\ge1}  \zeta(\underbrace{1, \ldots , 1}_{r-1}, k+1) x^ry^k\=\frac{\Gamma(1-x)\Gamma(1-y)}{\Gamma(1-x-y)}
\= \exp\biggl(\sum_{n=2}^\infty \zeta(n)\frac{x^n+y^n-(x+y)^n}{n}\biggr). \]
Also, we should remark that the right-hand side of the theorem is symmetric with respect to any permutations of the 
arguments, so that the theorem of Hoffman \cite[Theorem~2.2]{Hof} ensures the right-hand side is a
polynomial in the Riemann zeta values $\zeta(n)$, the fact also can be seen from the generating function above.
Moreover, we note that all the MZVs appearing on the right-hand side is of maximal height.

As a final remark, the case of $r=2$ gives nothing but the ``sum formula'' for depth 2 ($r=1$ gives the trivial
identity $\zeta(k+2)=\zeta(k+2)$). It was H.~Tsumura 
who first remarked that we could obtain the depth 2 sum formula if we looked at the behavior at $s=0$  of 
the identity \eqref{keyid} in the next section for $r=2$.

Recall the classical sum formula states that the sum of all
MZVs of fixed weight and depth is equal to the Riemann zeta value of that weight.  If we extend the sum
to include non-convergent MZVs with the shuffle regularization, the result will be the height-one MZV (up to sign).
 
\begin{theorem}[Shuffle-regularized sum formula] \label{thm2}
For any integers $r,k\ge1$, we have
\[ \sum_{\substack{\wt(\kk)=r+k\\\dep(\kk)=r}} \zeta^\sa(\kk)=(-1)^{r-1}\zeta(\underbrace{1,\ldots,1}_{r-1},k+1), \]
where $\zeta^\sa(\kk)$ is the shuffle regularized value which will be recalled in \S2.
\end{theorem}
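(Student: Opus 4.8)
The plan is to transport the sum into the shuffle algebra of words, reduce Theorem~\ref{thm2} to an identity between actual words, and then prove that identity by summing a redundant family of shuffle relations that all lie in the kernel of the regularization map.

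\textbf{Reduction.} Recall from \S2 the $\Q$‑algebra of words in the letters $x,y$ under the shuffle product $\,\sqcup\!\!\sqcup\,$, its subalgebra $\mathcal H^0=\Q+x\mathcal H y$ of admissible words, the linear map $Z\colon\mathcal H^0\to\R$ with $Z(z_{k_r}\cdots z_{k_1})=\zeta(\kk)$ (for admissible $\kk$, with $z_k=x^{k-1}y$), and the regularization map $\mathrm{reg}$, the $\Q$‑linear shuffle homomorphism from the space $\mathcal H^1=\Q+\mathcal H y$ of words ending in $y$ onto $\mathcal H^0$ characterized by $\mathrm{reg}|_{\mathcal H^0}=\mathrm{id}$ and $\mathrm{reg}(y)=0$, so that $\zeta^\sa(\kk)=Z\bigl(\mathrm{reg}(w_\kk)\bigr)$ with $w_\kk:=z_{k_r}z_{k_{r-1}}\cdots z_{k_1}$ the word of $\kk$. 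As $\kk$ runs over the indices with $\dep(\kk)=r$ and $\wt(\kk)=r+k$, the word $w_\kk$ runs exactly once over the set $\mathcal W$ of all words having precisely $k$ letters $x$ and $r$ letters $y$ and ending in $y$; moreover $x^ky^r=w_\kk$ for $\kk=(\underbrace{1,\ldots,1}_{r-1},k+1)$, so $Z(x^ky^r)=\zeta(\underbrace{1,\ldots,1}_{r-1},k+1)$. Since $Z$ and $\mathrm{reg}$ are linear, Theorem~\ref{thm2} is equivalent to the identity in $\mathcal H^0$
\[ \sum_{W\in\mathcal W}\mathrm{reg}(W)\=(-1)^{r-1}\,x^ky^r\qquad(r,k\ge1). \]

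\textbf{The relations.} For each $j=1,\ldots,r-1$ I would use the element $x^ky^j\,\sqcup\!\!\sqcup\,y^{r-j}\in\mathcal H^1$. On one hand it lies in $\ker\mathrm{reg}$: since $\mathrm{reg}$ is a shuffle homomorphism, $\mathrm{reg}(y)=0$, and $y^{r-j}$ is a scalar multiple of the shuffle power $y^{\sqcup\!\!\sqcup(r-j)}$, we get $\mathrm{reg}\bigl(x^ky^j\,\sqcup\!\!\sqcup\,y^{r-j}\bigr)=0$. On the other hand a direct count of shuffles gives
\[ x^ky^j\,\sqcup\!\!\sqcup\,y^{r-j}\=\sum_{W\in\mathcal W}\binom{d(W)}{j}\,W , \]
where $d(W)$ is the length of the terminal run of $y$'s in $W$: describing a shuffle of $x^ky^j$ and $y^{r-j}$ producing a given $W$ as a choice of which positions of $W$ come from $x^ky^j$, these must be all $k$ of the $x$-positions of $W$ together with $j$ of its $y$-positions, and because the $j$ letters $y$ of $x^ky^j$ follow its $k$ letters $x$, the chosen $y$-positions must lie after every $x$ of $W$, i.e.\ be $j$ of the $d(W)$ terminal $y$'s; each such choice yields exactly one shuffle. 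Applying $\mathrm{reg}$ gives $\sum_{W\in\mathcal W}\binom{d(W)}{j}\mathrm{reg}(W)=0$ for $j=1,\ldots,r-1$.

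\textbf{Conclusion.} Grouping $\mathcal W$ by the value $d:=d(W)\in\{1,\ldots,r\}$ and setting $R_d=\sum_{d(W)=d}\mathrm{reg}(W)\in\mathcal H^0$, the relations read $\sum_{d=1}^{r}\binom{d}{j}R_d=0$ for $j=1,\ldots,r-1$. This system is triangular (the relation for $j=r-1$ involves only $R_{r-1},R_r$, that for $j=r-2$ only $R_{r-2},R_{r-1},R_r$, and so on), so it determines each $R_d$ as a multiple of $R_r$; one checks the solution is $R_d=(-1)^{r-d}\binom{r}{d}R_r$. The unique word of $\mathcal W$ with $d(W)=r$ is $x^ky^r$, which is admissible because $k\ge1$, whence $R_r=\mathrm{reg}(x^ky^r)=x^ky^r$. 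Therefore
\[ \sum_{W\in\mathcal W}\mathrm{reg}(W)\=\sum_{d=1}^{r}R_d\=R_r\sum_{d=1}^{r}(-1)^{r-d}\binom{r}{d}\=(-1)^{r-1}x^ky^r , \]
using $\sum_{d=1}^{r}(-1)^{r-d}\binom{r}{d}=-(-1)^{r}$, and applying $Z$ yields Theorem~\ref{thm2}.

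\textbf{Main difficulty.} Once the relations are in place the rest is bookkeeping; the content is (i) recognizing that one should lift the statement from $\zeta$-values to the word-level identity in $\mathcal H^0$, and (ii) producing the redundant relations $x^ky^j\,\sqcup\!\!\sqcup\,y^{r-j}\in\ker\mathrm{reg}$ with the precise multiplicities $\binom{d(W)}{j}$. That combinatorial count, though elementary, must be done carefully against the definition of the shuffle product, and it is exactly where the hypothesis $k\ge1$ is forced (it is what makes $x^ky^r$ admissible; for $k=0$ one has $\mathrm{reg}(y^r)=0\ne y^r$ and the identity fails, consistently with the divergence of $\zeta(\underbrace{1,\ldots,1}_{r})$). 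A more computational alternative would be to evaluate the two‑variable generating function of the left‑hand sums and identify it with the Aomoto--Drinfeld product formula recalled after Theorem~\ref{thm1}, but the route above is shorter and stays entirely inside $\mathcal H^0$.
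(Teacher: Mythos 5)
Your proof is correct, but it is a genuinely different argument from the one in the paper. The paper deduces Theorem~\ref{thm2} analytically from the Arakawa--Kaneko identity \eqref{keyid}: multiply both sides by $\Gamma(s+1)$, take constant terms of the Laurent expansions at $s=0$, and use the description \eqref{shpole} of the principal parts in terms of the shuffle-regularized polynomials (the terms $\Gamma(s+1)\zeta(\underbrace{1,\ldots,1}_i,1+s)$ contribute nothing since $Z^\sa_{1,\ldots,1}(T)$ is a pure power of $T$). You instead work entirely inside the shuffle algebra: you reduce the statement to $\sum_{W\in\mathcal W}\mathrm{reg}(W)=(-1)^{r-1}x^ky^r$, produce the kernel elements $x^ky^j\,\sqcup\!\!\sqcup\,y^{r-j}=\sum_W\binom{d(W)}{j}W$ for $1\le j\le r-1$ (your combinatorial count of the shuffles is right, and words ending in $x$ drop out since $\binom{0}{j}=0$), and solve the resulting triangular system; the binomial cancellations all check out. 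This is essentially the ``alternative proof by computing directly the left-hand side using the regularization formula \cite[(5.2)]{IKZ}'' that the authors mention after their proof but do not carry out. What the paper's route buys is economy given the machinery already installed for Theorem~\ref{thm1} (the same identity \eqref{keyid} drives both theorems, with stuffle regularization for one and shuffle for the other); what your route buys is self-containedness and a purely combinatorial mechanism with no analytic continuation. One caveat: the paper's \S2 defines $Z^\sa_\kk(T)$ by the asymptotics of $\Li_\kk(z)$ as $z\to1$ and does not set up Hoffman's word algebra, so your identification $\zeta^\sa(\kk)=Z(\mathrm{reg}(w_\kk))$, with $\mathrm{reg}$ the shuffle homomorphism killing $y$, is importing the algebraic characterization of the regularization from \cite{IKZ} rather than something recalled in this paper; that equivalence is standard but should be cited explicitly rather than attributed to \S2.
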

\noindent 
We do not know if there exists any nice stuffle-regularized sum formula.

Finally, we give a kind of sum formula for the maximal-height MZVs in the form of generating function.
This is essentially known, but may be new in this form of presentation.  
Let $T(k)$ be the sum of all multiple zeta values of weight $k$ and of maximal height:
\[ T(k):=\sum_{\substack{k_1+\cdots+k_r=k\\
r\ge1,\,{}^\forall k_i\ge2}}\zeta(k_1,\ldots,k_r). \]
Recall the multiple zeta-star value $\zeta^\star(k_1,\ldots,k_r)$ is given by the non-strict nested sum
\[ \zeta^\star(k_1,\ldots,k_r)\=\sum_{0<m_1\le\cdots\le m_r}\frac1{m_1^{k_1}\cdots m_r^{k_r}}. \]

\begin{theorem}\label{thm3}
We have the generating series identity
\[ 1+\sum_{k=2}^\infty T(k)x^k=\biggl(1+\sum_{n=1}^\infty \zeta^\star(\underbrace{2, \ldots ,2}_n)x^{2n}\biggr)
\biggl(1+\sum_{n=1}^\infty \zeta(\underbrace{3, \ldots ,3}_n)x^{3n}\biggr). \]
\end{theorem}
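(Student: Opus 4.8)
The plan is to prove the identity by showing that \emph{both} sides equal the infinite product
\[ P(x)\;:=\;\prod_{m=1}^{\infty}\Bigl(1+\frac{x^2}{m(m-x)}\Bigr), \]
which converges absolutely for $0\le x<1$; equivalently, viewed as a formal power series in $x$, the coefficient of each $x^k$ is a finite sum of convergent multiple zeta values, so the rearrangements below are legitimate.

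For the left-hand side, I would expand each MZV by its defining nested series and reorganize. Since $\zeta(k_1,\ldots,k_r)\,x^{k_1+\cdots+k_r}=\sum_{0<m_1<\cdots<m_r}\prod_{i=1}^{r}(x/m_i)^{k_i}$, summing over all indices $(k_1,\ldots,k_r)$ with every $k_i\ge2$ and over all $r\ge0$ gives
\[ 1+\sum_{k\ge2}T(k)x^k\=\sum_{r\ge0}\ \sum_{0<m_1<\cdots<m_r}\ \prod_{i=1}^{r}\Bigl(\sum_{k_i\ge2}\Bigl(\frac{x}{m_i}\Bigr)^{k_i}\Bigr)\=\sum_{r\ge0}\ \sum_{0<m_1<\cdots<m_r}\ \prod_{i=1}^{r}\frac{x^2}{m_i(m_i-x)}, \]
and the last expression is exactly the expansion of $P(x)$ as a product over $m\ge1$, the term $r=0$ supplying the constant $1$.

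For the right-hand side, the two factors are classical specializations of symmetric-function generating series (each also visible by directly expanding the relevant infinite product): $\zeta^{\star}(\underbrace{2,\ldots,2}_{n})$ is the complete homogeneous symmetric function $h_n$ in the variables $1/m^2$ ($m\ge1$), and $\zeta(\underbrace{3,\ldots,3}_{n})$ is the elementary symmetric function $e_n$ in the variables $1/m^3$ ($m\ge1$). Hence
\[ 1+\sum_{n\ge1}\zeta^{\star}(\underbrace{2,\ldots,2}_{n})x^{2n}\=\prod_{m\ge1}\frac{1}{1-x^2/m^2},\qquad 1+\sum_{n\ge1}\zeta(\underbrace{3,\ldots,3}_{n})x^{3n}\=\prod_{m\ge1}\Bigl(1+\frac{x^3}{m^3}\Bigr), \]
so the right-hand side of the theorem equals $\prod_{m\ge1}\dfrac{1+x^3/m^3}{1-x^2/m^2}$. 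Finally the elementary factorization $m^3+x^3=(m+x)(m^2-mx+x^2)$ gives
\[ \frac{1+x^3/m^3}{1-x^2/m^2}\=\frac{m^3+x^3}{m(m^2-x^2)}\=\frac{m^2-mx+x^2}{m(m-x)}\=1+\frac{x^2}{m(m-x)}, \]
so this product is again $P(x)$, and the two sides agree.

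As for where the real content lies: there is no hard step: the argument consists of the two symmetric-function identities together with the rational-function identity above. The only points needing a word of care are the justification of interchanging infinite sums and products (immediate from absolute convergence for $0\le x<1$, or from the finiteness of each $x^k$-coefficient) and the bookkeeping of the constant terms $1$ on each side, corresponding to the empty index $r=0$ on the left and to $n=0$ in each factor on the right.
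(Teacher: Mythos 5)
Your proof is correct: the single-variable identity $1+\sum_{k\ge2}t^k=\frac{1+t^3}{1-t^2}$ (with $t=x/m$), applied factor by factor to the absolutely convergent infinite products, is exactly what is needed, and your treatment of convergence (all terms nonnegative for $0\le x<1$, each $x^k$-coefficient a finite sum of convergent MZVs) is adequate. In substance this is the same idea as the paper's first, merely sketched, proof: the paper decomposes each $k_i\ge2$ uniquely as $2+\cdots+2$ or $3+2+\cdots+2$ and organizes the bookkeeping as a stuffle product of $\zeta^\star(2,\ldots,2)$ with $\zeta(3,\ldots,3)$, whereas your rational identity $\frac{1+t^3}{1-t^2}=1+t^2+t^3+\cdots$ is precisely the generating-function form of that unique decomposition, and your expansion of $\prod_{m}\bigl(1+\frac{x^2}{m(m-x)}\bigr)$ over chains $m_1<\cdots<m_r$ replaces the stuffle combinatorics by a direct manipulation of the nested series. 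What your version buys is a complete, self-contained argument where the paper only gestures (``this is almost obvious''), and it entirely avoids the paper's alternative route through the Ohno--Zagier identity and the exponential generating functions $\exp\bigl(\sum_n\zeta(2n)x^{2n}/n\bigr)$ and $\exp\bigl(\sum_n(-1)^{n-1}\zeta(3n)x^{3n}/n\bigr)$.
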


After some necessary preliminaries in the next section, we prove these results in \S3.

\section{Preliminaries}
Recall the function introduced in \cite{AK1999},
\begin{equation}
\xi(k_1,\ldots,k_r;s)=\frac{1}{\Gamma(s)}\int_0^\infty {t^{s-1}}\frac{\Li_{k_1,\ldots,k_r}(1-e^{-t})}{e^t-1}\,dt,  \label{xidef}
\end{equation}
where $\Li_{k_1,\ldots,k_r}(z)$ is the multiple polylogarithm function defined by
\[  \Li_{k_1,\ldots,k_r}(z)\= \sum_{0<m_1<\cdots<m_r}\frac{z^{m_r}}{m_1^{k_1}\cdots m_r^{k_r}}.\]
When $k_r>1$, the value at $z=1$ of $\Li_{k_1,\ldots,k_r}(z)$ is nothing but the multiple zeta value
$\zeta(k_1,\ldots,k_r)$.  The function $\xi(k_1,\ldots,k_r;s)$ is analytically continued to an {\it entire} function in $s$. 
In the special case where $(k_1,\ldots,k_r)=(\underbrace{1,\ldots,1}_{r-1},k)$, Arakawa and the first-named author 
have established in \cite[Theorem 8]{AK1999}
the following identity (we interchange $r$ and $k$
and shift $s$ to $s+1$), which is crucial in our proofs of Theorems~\ref{thm1} and \ref{thm2}:
\begin{align} \label{keyid}
\xi(\underbrace{1, \ldots , 1}_{k-1}, r ; s+1)
=&\ (-1)^{r-1}\sum_{\substack{a_{1}+\cdots +a_{r}=k\\{}^\forall a_p\geq 0}} \binom{s+a_{r}}{a_{r}}
\,\zeta(a_{1}+1, \ldots , a_{r-1}+1,a_{r}+1+s)\\ \nonumber
&+\sum^{r-2}_{i=0}(-1)^{i}\zeta(\underbrace{1, \ldots , 1}_{k-1}, r-i)\,\zeta(\underbrace{1, \ldots ,1}_{i},1+s), 
\end{align}
for any $r, k\geq 1$. Here, we have introduced a complex variable $s$ in the outer-most exponent of the MZV;
\[ \zeta(k_1,\ldots,k_{r-1},k_r+s)\,:=\, \sum_{0<m_1<\cdots<m_r}\frac1{m_1^{k_1}\cdots m_{r-1}^{k_{r-1}}m_r^{k_r+s}}. \]
As remarked in \cite[Remark~3.7]{KT}, equation~\eqref{keyid} is equivalent to the connection formula
of Euler's type of the multi-polylogarithm $\Li_{\scriptsize{\underbrace{1,\ldots,1}_{k-1},r}}(z)$.
It is shown in \cite{AK1999} that the function $\zeta(k_1,\ldots,k_{r-1},k_r+s)$ can be meromorphically continued to the whole $s$-plane,
and has a pole at $s=0$ if $k_r=1$.  We need the description of the principal part at $s=0$ in terms of regularized polynomials,
which we now explain. 

For an index $\kk=(k_1,\ldots,k_r)$, we denote by $Z_\kk^\sa(T)$ and $Z_\kk^\ast(T)$
respectively the shuffle and the stuffle (harmonic) regularized polynomial associated to $\kk$.
These are the polynomials in $\R[T]$ uniquely characterized by the asymptotics
\[ \Li_{k_1,\ldots,k_r}(z)\=Z_\kk^\sa(-\log(1-z))+O((1-z)^\varepsilon)\quad\text{as}\ z\to1\ \text{for some}\ \varepsilon>0 \]
and
\[ \sum_{0<m_1<\cdots<m_r<M}\frac1{m_1^{k_1}\cdots m_{r}^{k_{r}}}\=Z_\kk^\ast(\log M+\gamma)+O(M^{-\varepsilon})
\quad\text{as}\ M\to\infty\ \text{for some}\ \varepsilon>0,\]
where $\gamma$ is Euler's constant. 
We refer the reader to \cite{IKZ} for details about the regularizations.  We denote the constant term $Z_\kk^\sa(0)$
of the shuffle-regularized polynomial $Z_\kk^\sa(T)$ by $\zeta^\sa(\kk)$ and call it the shuffle-regularized value of
(possibly divergent) $\zeta(\kk)$.
If $\kk$ is of the 
form $\kk=(k_1,\ldots,k_n,\underbrace{1,\ldots,1}_m)$ with $k_n>1, m\ge0$, then both 
$Z_\kk^\sa(T)$ and $Z_\kk^\ast(T)$ are of degree $m$ and each coefficient of $T^i$ is 
a linear combination of multiple zeta values of weight $m-i$.  If $m=0$ (and so $n=r$), then $Z_\kk^\sa(T)=Z_\kk^\ast(T)=
Z_\kk^\sa(0)=Z_\kk^\ast(0)=\zeta(k_1,\ldots,k_r)$.
Now write 
\[ Z_\kk^\sa(T)\=\sum_{i=0}^m a_i(\kk)\frac{T^i}{i!}\quad \text{and}\quad  Z_\kk^\ast(T)\=
\sum_{i=0}^m b_i(\kk)\frac{(T-\gamma)^i}{i!}.\]
Then, as shown in \cite{AK2004}, the principal parts at $s=0$ of 
$\Gamma(s+1)\zeta(k_1,\ldots,k_{r-1},k_r+s)$ and $\zeta(k_1,\ldots,k_{r-1},k_r+s)$ 
are given respectively by
\begin{equation}\label{shpole}
\Gamma(s+1)\zeta(k_1,\ldots,k_{r-1},k_r+s)=\sum_{i=0}^m \frac{a_i(\kk)}{s^i}+O(s)\quad  (s\to0) 
\end{equation}
and
\begin{equation}\label{harpole} \zeta(k_1,\ldots,k_{r-1},k_r+s)=\sum_{i=0}^m \frac{b_i(\kk)}{s^i}+O(s)\quad (s\to0).
\end{equation}
We take this opportunity to point out a flaw in the proof in \cite{AK2004}. The integral in the sum
on the right of the equation below (32) may not converge. But the argument can easily be modified by
splitting the integral $\int_0^\infty$ on the left as $\int_0^1+\int_1^\infty$ and looking at the limits when $s\to 0$ separately.
 
\section{Proofs}
\begin{proof}[Proof of Theorem~\ref{thm1}]
Since we have the duality $\zeta(\underbrace{1, \ldots , 1}_{r-1}, k+1)=\zeta(\underbrace{1, \ldots , 1}_{k-1}, r+1)$
and the right-hand side of \eqref{symmzv} 
is symmetric in $r$ and $k$, it is enough to prove the theorem under the assumption $k\geq r$. 
We proceed by induction on $r$. When $r=1$, both sides become $\zeta(k+1)$ and the assertion is true for all
$k\ge1$.  Suppose $r\ge2$ and the theorem is true when the depth on the left is less than $r$ (and $k$ is greater than
or equal to the depth).  

We look at the values at $s=0$ of both sides of \eqref{keyid}.  The value $\xi(\underbrace{1, \ldots , 1}_{k-1}, r ; 1)$ on the 
left is evaluated in \cite[Theorem  9]{AK1999} and is equal to $\zeta(\underbrace{1, \ldots , 1}_{r-1}, k+1)$.  
Since the functions $\zeta(a_{1}+1, \ldots , a_{r-1}+1,a_{r}+1+s)$  with $a_r=0$ as well as 
$\zeta(\underbrace{1, \ldots ,1}_{i},1+s)$ on the right have poles at $s=0$, we need to look at the constant term 
of the Laurent expansion of the right-hand side. (Because $\xi(\underbrace{1, \ldots , 1}_{k-1}, r ; s+1)$ is 
entire, all the poles on the right actually cancel out.)  In what follows within the proof of Theorem~\ref{thm1},
we simply write the constant term at $s=0$ of $\zeta(k_1,\ldots,k_{r-1},k_r+s)$ as $\zeta(k_1,\ldots,k_{r-1},k_r)$
even when $k_r=1$,
which is equal to $Z_{k_1,\ldots,k_r}^\ast(\gamma)$ as recalled in the previous section.  
Note that these values satisfy the stuffle (harmonic) product rule.  With this convention, we have
\begin{align*}
\zeta(\underbrace{1, \ldots , 1}_{r-1}, k+1)
=\ & (-1)^{r-1}\sum_{\substack{a_{1}+\cdots +a_{r}=k\\^{\forall}a_p\geq 0}}
\zeta(a_{1}+1, \ldots , a_{r}+1)\\
&+\sum^{r-2}_{i=0}(-1)^{i}\zeta(\underbrace{1, \ldots , 1}_{k-1}, r-i)\cdot\zeta(\underbrace{1, \ldots ,1}_{i+1}). 
\end{align*}
We apply the duality $\zeta(\underbrace{1,\ldots 1}_{k-1}, r-i)=\zeta(\underbrace{1,\ldots , 1}_{r-i-2}, k+1)$
in the second sum on the right and use the induction hypothesis (since $r-i-1<r$) to obtain
\begin{align*}
 \zeta(\underbrace{1, \ldots , 1}_{r-1}, k+1)
&= (-1)^{r-1}\sum_{\substack{a_{1}+\cdots +a_{r}=k\\^{\forall}a_p\geq 0}}
\zeta(a_{1}+1, \ldots , a_{r}+1)\\
&\quad  +\sum^{r-2}_{i=0}(-1)^i\sum^{r-i-1}_{j=1}(-1)^{j-1}\sum_{\substack{\wt(\aa)=k,\,\wt(\bb)=r-i-1\\\dep(\aa)=\dep(\bb)=j}}
\zeta(\aa+\bb)\cdot \zeta(\underbrace{1, \ldots ,1}_{i+1})\\
&= (-1)^{r-1}\sum_{\substack{a_{1}+\cdots +a_{r}=k\\^{\forall}a_p\geq 0}}
\zeta(a_{1}+1, \ldots , a_{r}+1)\\
&\quad  +\sum^{r-1}_{j=1}(-1)^{j-1}\sum_{\substack{\wt(\aa)=k\\ \dep(\aa)=j}}\sum^{r-j-1}_{i=0}(-1)^i
\sum_{\substack{\wt(\bb)=r-i-1\\ \dep(\bb)=j}}
\zeta(\aa+\bb)\cdot \zeta(\underbrace{1, \ldots ,1}_{i+1}). 
\end{align*}
Now we expand the product $\zeta(\aa+\bb)\cdot \zeta(\underbrace{1, \ldots ,1}_{i+1})$ by using the 
stuffle product and re-arrange the terms according to the number of $1$'s to compute the inner
sum \[\sum^{r-j-1}_{i=0}(-1)^i\sum_{\substack{\wt(\bb)=r-i-1\\ \dep(\bb)=j}}
\zeta(\aa+\bb)\cdot \zeta(\underbrace{1, \ldots ,1}_{i+1}).\] For that purpose, we introduce another notation.
For a fixed index $\aa=(a_1,\ldots,a_j)$ of depth $j$ and integers $l, n\ge0$, we set
\[S(\aa,l,n):=\sum_{\substack{\wt(\bb)=r-l\\\dep(\bb)=j,\,\h(\bb)=n}}
\zeta(a_1+b_1,\ldots,1,\ldots,a_s+b_s,\ldots,1,\ldots,a_j+b_j),\]
where the sum runs over all $\bb=(b_1,\ldots,b_j)$ of weight $r-l$, depth $j$, and height $n$, and over all possible 
positions of exactly $l$ $1$'s in the arguments.  
Then, by the stuffle product rule, we have 
\[ \sum_{\substack{\wt(\bb)=r-i-1\\ \dep(\bb)=j}}\zeta(\aa+\bb)\cdot \zeta(\underbrace{1, \ldots ,1}_{i+1})
=\sum_{l=\max(0,i+1-j)}^{i+1} \sum_{n=i+1-l}^j\binom{n}{i+1-l}S(\aa,l,n).\]
We note that,  when we expand $\zeta(\aa+\bb) \zeta(\underbrace{1, \ldots ,1}_{i+1})$ by the stuffle product,
the number of 1's in each term should at least $i+1-j$ when $j<i+1$. And if the number of 1's is $l$,
then the height $n$ on the right varies from $i+1-l$ to $j$.   A particular term in the sum $S(\aa,l,n)$
on the right comes in exactly $\binom{n}{i+1-l}$ ways from the product $\zeta(\aa+\bb) \zeta(\underbrace{1, \ldots ,1}_{i+1})$
on the left, because there are ${i+1-l}$ out of $n$ positions of the index $\aa+\bb$ on the left which
produces that particular term on the right by colliding $i+1-l$ 1's at those positions.

   When we sum this up alternatingly
for $i=0,\ldots, r-j-1$ with signs, all coefficients of $S(\aa,l,n)$ with $n,l\ge1$ vanish, because of
the binomial identity $\sum_{i=l-1}^{n+l-1}(-1)^i\binom{n}{i+1-l}=0$ if $n,l\ge1$.  Hence, also by the identity
$\sum_{i=0}^{n-1}(-1)^i\binom{n}{i+1}=1$ if $n\ge1$ (the case $l=0$), we obtain 
\[ \sum^{r-j-1}_{i=0}(-1)^i\sum_{\substack{\wt(\bb)=r-i-1\\ \dep(\bb)=j}}
\zeta(\aa+\bb)\cdot \zeta(\underbrace{1, \ldots ,1}_{i+1})=\sum_{n=1}^jS(\aa,0,n)+(-1)^{r-j-1}S(\aa,r-j,0).\]
When $j\le r-1$, we have $\sum_{n=1}^jS(\aa,0,n)=\sum_{\wt(\bb)=r,\,\dep(\bb)=j}\zeta(\aa+\bb)$ and this 
gives 
\begin{equation}\label{part1}
\sum_{j=1}^{r-1}(-1)^{j-1}\sum_{\substack{\wt(\aa)=k,\,\wt(\bb)=r\\\dep(\aa)=\dep(\bb)=j}}\zeta(\aa+\bb).
\end{equation}
Finally, we have 
\begin{align*}
&\sum_{j=1}^{r-1}(-1)^{j-1}\sum_{\substack{\wt(\aa)=k\\ \dep(\aa)=j}}(-1)^{r-j-1}S(\aa,r-j,0)\\
&= (-1)^r \sum_{j=1}^{r-1}\sum_{\substack{\wt(\aa)=k\\ \dep(\aa)=j}}S(\aa,r-j,0)\\
&=(-1)^r \sum_{\substack{a_1+\cdots+a_r=k\\ a_p\ge0,\,\text{at least one }a_p=0}}
\zeta(a_1+1,\ldots,a_r+1).
\end{align*}
Hence, this and the terms in 
\[ (-1)^{r-1}\sum_{\substack{a_{1}+\cdots +a_{r}=k\\^{\forall}a_p\geq 0}}
\zeta(a_{1}+1, \ldots , a_{r}+1)\] 
with at least one $a_p=0$ cancel out, thereby remains the term 
\begin{equation}\label{part2} (-1)^{r-1}\sum_{\substack{\wt(\aa)=k,\,\wt(\bb)=r\\\dep(\aa)=\dep(\bb)=r}}\zeta(\aa+\bb).
\end{equation}
The sum of \eqref{part1} and \eqref{part2} gives the right-hand side of the theorem, and our proof is done. 
\end{proof}
\bigskip

\begin{proof}[Proof of Theorem~\ref{thm2}]
We multiply $\Gamma(s+1)$ on both sides of the identity \eqref{keyid} and look at the 
constant terms of the Laurent expansions at $s=0$.  The left-hand side is holomorphic at $s=0$ and gives the 
value $\zeta(\underbrace{1,\ldots,1}_{r-1},k+1)$ as we already saw in the last subsection.
The function $\binom{s+a_r}{a_r}\Gamma(s+1) \zeta(a_1+1,\ldots,a_{r-1}+1,a_r+1+s)$ on the right is holomorphic
at $s=0$ if $a_r>1$, and in that case gives the value $\zeta(a_1+1,\ldots,a_{r-1}+1,a_r+1)$.
If $a_r=0$, then $\binom{s+a_r}{a_r}\Gamma(s+1) \zeta(a_1+1,\ldots,a_{r-1}+1,a_r+1+s)=
\Gamma(s+1) \zeta(a_1+1,\ldots,a_{r-1}+1,1+s)$ has a pole at $s=0$ and its constant term of the 
Laurent expansion is $\zeta^\sa(a_1+1,\ldots,a_r+1)$ by \eqref{shpole}.  
On the other hand, the function $\Gamma(s+1)\zeta(\underbrace{1,\ldots,1}_i,1+s)$ has no constant term
at $s=0$ because $Z_{\scriptsize{\underbrace{1,\ldots,1}_{i+1}}}^\sa(T)=T^{i+1}/(i+1)!$, and hence 
we conclude the proof of the theorem. 
\end{proof}

We remark that we can prove the theorem alternatively by computing directly the left-hand side 
using the regularization formula \cite[(5.2)]{IKZ}. Also, by Theorem~\ref{thm2} and \cite[Corollary~5]{IKZ}, we easily 
obtain the following sum formula for the shuffle-regularized polynomials:
\[ \sum_{\substack{\wt(\kk)=r+k\\\dep(\kk)=r}} 
\zeta^\sa(\kk;T)=\sum_{i=0}^{r-1} (-1)^{r-1-i}\zeta(\underbrace{1,\ldots,1}_{r-1-i},k+1)
\,\frac{T^i}{i!} \]
for any $r,k\ge1$, where $\zeta^\sa(\kk;T)=Z_{\R}^\sa(w)$ in the notation of \cite{IKZ} with
$w$ being a word corresponding to $\kk$.

\bigskip

\begin{proof}[Proof of Theorem~\ref{thm3}]
This is almost obvious if we write $k_i$ ($\ge2$) as $k_i=2+\cdots+2$ ($k_i$\,: even) or $k_i=3+2+\cdots+2$
 ($k_i$\,: odd), and consider the stuffle product of  $\zeta^\star(2, \ldots ,2)\zeta(3, \ldots ,3)$ after
 writing $\zeta^\star(2, \ldots ,2)$ as sums of ordinary multiple zeta values. 
 
 An alternative proof is given by using the main identity in \cite{OZ}.  As is already remarked there, if we specialize
 $y=0$ and $z=x^2$ in equation (3) in \cite{OZ}, we obtain
 \[ 1+\sum_{k=2}^\infty T(k)x^k=\exp\biggl(\sum_{n=1}^\infty\frac{\zeta(2n)}{n}x^{2n}\biggr)
 \cdot \exp\biggl(\sum_{n=1}^\infty(-1)^{n-1}\frac{\zeta(3n)}{n}x^{3n}\biggr).\] 
 It is standard that 
\[\exp\biggl(\sum_{n=1}^\infty\frac{\zeta(2n)}{n}x^{2n}\biggr)=\Gamma(1+x)\Gamma(1-x)
=\prod_{m=1}^\infty \biggl(1-\frac{x^2}{m^2}\biggr)^{-1}
=1+\sum_{n=1}^\infty \zeta^\star(\underbrace{2, \ldots ,2}_n)x^{2n}, \]
whereas the identity 
\[ \exp\biggl(\sum_{n=1}^\infty(-1)^{n-1}\frac{\zeta(3n)}{n}x^{3n}\biggr)=
1+\sum_{n=1}^\infty \zeta(\underbrace{3, \ldots ,3}_n)x^{3n} \]
is a special case of \cite[Corollary 2 of Proposition 4]{IKZ}.
\end{proof}

\section*{Acknowledgements}
The authors thank Hirofumi Tsumura for his suggestion to look at the identity \eqref{keyid} more
closely, that lead us to Theorem~\ref{thm1}.  They also thank S.~Yamamoto for his calling our
attention to the reference \cite{OZ} in relation to Theorem~\ref{thm3}.  This paper was written
during the first author's stay at the I.H.E.S., France.  He thanks Francis Brown for the invitation
and for all the hospitality and support given there.
This work is supported by Japan Society for the Promotion of Science, 
Grant-in-Aid for Scientific Research (B) 23340010 (M.K.) and Grant-in-Aid for JSPS Fellows 14J00005 (M.S.).

\ 

\ 

\begin{flushleft}
\begin{small}
{M.~Kaneko}: 
{Faculty of Mathematics, 
Kyushu University, 
Motooka 744, Nishi-ku,
Fukuoka 819-0395, 
Japan}

e-mail: {\tt mkaneko@math.kyushu-u.ac.jp}

\

{M.~Sakata}: 
{Graduate School of Mathematics, Kyushu University, 
Motooka 744, Nishi-ku,
Fukuoka 819-0395, 
Japan}

e-mail: {\tt m-sakata@math.kyushu-u.ac.jp}
\end{small}
\end{flushleft}

\end{document}